\newcommand{\C}{\mathbb C}
\newcommand{\A}{\mathcal A}
\numberwithin{equation}{section}
\newtheorem{ex}{Example}[section]
\newtheorem{cor}{Corollary}[section]
\newtheorem{thm}{Theorem}[section]
\newtheorem{prop}{Proposition}[section]
\theoremstyle{remark}
\def\cit{\hbox{\it I\hskip -5.5pt C\/}}
\title{A formula for the numerical range of elementary operators on $C^*-$algebra}
\author{M. Barraa}
\email{barraa@hotmail.com}
\subjclass{47A12, 47B47}
\keywords{ Elementary operators, numerical range.}
\address{Department de Mathematics, Faculty of Sciences
Semlalia, P.O. Box 2390,  Marrakesh, Morocco.}
\begin{document}

\begin{abstract}
Let $\A$ be a $C^*-$algebra with unit element $1$ and unitary group $U.$  Let $a=(a_1, ..., a_k)$ and $b=(b_1, ..., b_k)$ 
two $k-$tuples of elements in $\A.$ The elementary operator associated to  $a$ and $b$ 
is defined by $ R_{a, b} (x)= \sum_{i=1}^ {k} a_ixb_i.$ In this paper we prove  the following formula for the numerical range of $R_{a,b}$:
$$V(R_{a, b}, B(\A))= [\cup \{ V(\sum_{i=1}^ {k}u^*a_iub_i, \A): u\in U\} ]^-.$$
This formulat solves the problem 4.5 of \cite{Fi}.
\end{abstract}

\maketitle
\section{Introduction}
Let  $\A$ be a complex unital Banach algebra. Let $a=(a_1, ..., a_k)$ and $b=(b_1, ..., b_k)$ 
two $k-$tuples of elements in $\A.$ The elementary operator associated to  $a$ and $b$ 
is defined by:
$$ R_{a, b} (x)= \sum_{i=1}^ {k} a_ixb_i.$$
This is a bounded linear operator on  $\A.$  We refer to \cite{Fi, CM} for good survey of this class of operators.\\
The numerical range of  $a \in \A$ is defined by:
$$ V(a, \A) = \{f(a) : \;\;\; f\in S \},$$
where  $S $ is the set of states in  $\A$ ( $ S  = \{f \in A^*, \;\; \|f\|= 1= f(e) \}$). We refer to 
\cite{BD1, BD2, GR} for the basic facts about numerical ranges.\\
A fondamental example is the $C^*-$algebra $B(H)$ of bounded linear operators on a complex Hilbert 
space $H.$ In this case we write $R_{A,B}$ for the elementary operator defined by 
$$R_{A,B}(X)=\sum_{i=1}^ {k} A_iXB_i.$$
The numerical range of a generalized derivation in $B(H)$ was studied by severals authors, see for instances \cite{Ky, Sha, Mat}. The numerical range of an elementary operators was studied by Seddik \cite{Se1, Se2,Se3}.
In \cite{Fi}, L. Fialkow posed the following problem :\\
\textbf{Problem.} Determine the numerical range and the essential numerical range of the elementary operator $R_{A,B}.$\\
In \cite{B}, we gave a formula for the numerical range of an elementary operator in the case of the operator algebra $B(H).$ 
In this paper we extend this formula to the context of a $C^*-$algebra.

\section{An inclusion in Banach algebra}

An element $u \in \A$ is said to be unitary if $u$ is invertible and $\|u\|=\|u^{-1}\|=1.$ Note that 
$\|ua\|=\|au\|=\|a\|$ for any $a\in A$ and any $u$ unitary in $\A.$
The set of unitary elements is denoted by $U(\A)$ or simply $U.$
\begin{thm}\cite{St}\label{t1}.
 Let $\A$ be a complex Banach algebra with unit. Then for any $a \in \A$ we have
$$ V(a, \A) = \bigcap_{z \in \cit } \{\lambda : \;\;\; |\lambda - z |\leq \| a-z\parallel \}.$$
\end{thm}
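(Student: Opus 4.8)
The plan is to prove the two inclusions separately, with all the difficulty residing in the reverse containment; throughout I identify a scalar $z$ with $z1\in\A$. The inclusion $V(a,\A)\subseteq\bigcap_{z}\{\lambda:|\lambda-z|\le\|a-z\|\}$ is immediate: given a state $f\in S$ and any $z\in\cit$, the normalisation $f(1)=1=\|f\|$ yields $|f(a)-z|=|f(a-z)|\le\|a-z\|$, so $f(a)$ lies in the closed disc centred at $z$ of radius $\|a-z\|$; letting $f$ and $z$ vary gives this half of the formula.

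For the reverse inclusion, fix $\lambda_0$ with $|\lambda_0-z|\le\|a-z\|$ for every $z\in\cit$ and put $b=a-\lambda_0$, so that it suffices to prove $0\in V(b,\A)$, the hypothesis now reading $|w|\le\|b-w\|$ for all $w\in\cit$. I would invoke two standard structural facts about the numerical range in a unital Banach algebra. First, $V(b,\A)$ is compact and convex, whence $0\in V(b,\A)$ if and only if $0$ cannot be separated from it by a line, that is, if and only if $\sup\{\mathrm{Re}(e^{-i\theta}\nu):\nu\in V(b,\A)\}\ge 0$ for every real $\theta$. Second, the Bonsall--Duncan support-function formula states that for each $c\in\A$, $\sup\{\mathrm{Re}\,\nu:\nu\in V(c,\A)\}=\inf_{t>0}t^{-1}(\|1+tc\|-1)$; combined with the rotation identity $e^{-i\theta}V(b,\A)=V(e^{-i\theta}b,\A)$, membership of $0$ reduces to showing $\|1+te^{-i\theta}b\|\ge 1$ for all $t>0$ and all real $\theta$.

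The closing step links this norm inequality to the disc hypothesis. Since $e^{-i\theta}1$ is a unitary scalar and hence norm-preserving, factoring out $te^{-i\theta}$ gives $\|1+te^{-i\theta}b\|=t\,\|b+e^{i\theta}t^{-1}\|=t\,\|b-w\|$ with $w=-e^{i\theta}t^{-1}$, so that $|w|=t^{-1}$. The hypothesis $|w|\le\|b-w\|$ then forces $\|1+te^{-i\theta}b\|=t\|b-w\|\ge t|w|=1$; as $(t,\theta)$ ranges over $(0,\infty)\times\rit$ the point $w=-e^{i\theta}t^{-1}$ sweeps out all of $\cit\setminus\{0\}$, so exactly the required family of inequalities is produced. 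This yields $0\in V(b,\A)$ and therefore $\lambda_0\in V(a,\A)$. I expect the main obstacle to be the correct invocation of the Bonsall--Duncan formula (equivalently, the identity $\max\mathrm{Re}\,V(c,\A)=\lim_{t\downarrow 0}t^{-1}(\|1+tc\|-1)$) together with the convexity-based separation argument; once these are in hand, the remaining manipulations are purely computational.
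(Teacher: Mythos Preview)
The paper does not supply its own proof of this theorem; it is quoted from Stampfli--Williams and used as an input to the subsequent arguments, so there is nothing in the paper to compare your proposal against directly.

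That said, your argument is correct and is essentially the standard proof of this result. The forward inclusion is indeed the one-line estimate $|f(a)-z|=|f(a-z)|\le\|a-z\|$ for any state $f$. For the reverse inclusion your reduction to $b=a-\lambda_0$ and the separation argument are sound, since $V(b,\A)$ is compact convex; the identity $\sup\mathrm{Re}\,V(c,\A)=\inf_{t>0}t^{-1}(\|1+tc\|-1)$ follows from the exponential/limit formula together with convexity of $t\mapsto\|1+tc\|$, so the infimum form you invoke is legitimate. The algebraic step $\|1+te^{-i\theta}b\|=t\|b-w\|$ with $w=-e^{i\theta}t^{-1}$ then converts the disc hypothesis $|w|\le\|b-w\|$ into $\|1+te^{-i\theta}b\|\ge1$ for all $t>0$ and all $\theta$, yielding $0\in V(b,\A)$ as required. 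The only point one might flag is that you use the hypothesis only for $w\ne0$, but the case $w=0$ is the vacuous inequality $0\le\|b\|$, so nothing is lost.
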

Note that $V(u^{-1}au, \A) =V(a, \A)$ for any $a \in \A$ and any unitary $u.$\\
\begin{prop}\label{p1}
Let $\A$ be a complex unital algebra. For $a=(a_1, ..., a_k)$ and $b=(b_1, ..., b_k)$ 
two $k-$tuples of elements in $\A,$ we have the following inclusion:
$$ \cup \{ V(\sum_{i=1}^ {k}u^ {-1}a_iub_i, \A) : u \in U\} \subset V(R_{a, b}, B(\A)).$$
\end{prop}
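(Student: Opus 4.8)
The plan is to show that for each fixed unitary $u\in U$, the numerical range $V(\sum_{i=1}^k u^{-1}a_iub_i,\A)$ is contained in $V(R_{a,b},B(\A))$. The key observation is that the map $L_u\colon x\mapsto u^{-1}xu$ is an isometric algebra automorphism of $\A$ (since $\|u^{-1}xu\|=\|xu\|=\|x\|$), and hence $L_u$ is a "unitary-like" element of $B(\A)$—more precisely it is invertible with $\|L_u\|=\|L_u^{-1}\|=1$, so it is unitary in the Banach algebra $B(\A)$. The remark following Theorem \ref{t1} then gives $V(T,B(\A))=V(L_u^{-1}TL_u,B(\A))$ for any $T\in B(\A)$.

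First I would apply this to $T=R_{a,b}$ and compute $L_u^{-1}R_{a,b}L_u$ explicitly: for $x\in\A$,
$$
(L_u^{-1}R_{a,b}L_u)(x)=u\Big(\sum_{i=1}^k a_i(u^{-1}xu)b_i\Big)u^{-1}=\sum_{i=1}^k (ua_iu^{-1})\,x\,(ub_iu^{-1}).
$$
Hmm — this conjugates both sides, which is not quite the operator $R_{(u^{-1}a_iu),\,b}$ appearing in the statement. So instead I would conjugate only on one factor: note that $R_{a,b}$ composed with the left-multiplication structure can be rearranged. The cleaner route: observe that $\sum_i u^{-1}a_iub_i$ is exactly the image $R_{a,b}(u)\cdot$ ... no. Let me reconsider: define the functional-theoretic step directly. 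Given a state $\varphi$ on $\A$, consider $\Phi\colon B(\A)\to\cit$, $\Phi(T)=\varphi\big(u^{-1}\,T(u)\big)$. Then $\Phi$ is linear, $\Phi(\mathrm{id})=\varphi(u^{-1}u)=\varphi(1)=1$, and $|\Phi(T)|\le\|u^{-1}\|\,\|T(u)\|\,\le\|T\|\,\|u\|\,\|u^{-1}\|=\|T\|$, so $\Phi$ is a state on $B(\A)$. Evaluating on $R_{a,b}$ gives
$$
\Phi(R_{a,b})=\varphi\Big(u^{-1}\sum_{i=1}^k a_i u b_i\Big)=\varphi\Big(\sum_{i=1}^k u^{-1}a_iub_i\Big),
$$
which is an arbitrary element of $V(\sum_i u^{-1}a_iub_i,\A)$ as $\varphi$ ranges over states of $\A$. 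Hence $V(\sum_i u^{-1}a_iub_i,\A)\subset V(R_{a,b},B(\A))$, and taking the union over $u\in U$ finishes the proof.

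The main obstacle I anticipate is the verification that $\Phi$ is genuinely a state on $B(\A)$—specifically checking $\|\Phi\|=1$ rather than just $\|\Phi\|\le 1$; but this follows automatically since $\Phi(\mathrm{id})=1$ forces $\|\Phi\|\ge 1$. A secondary subtlety is that $B(\A)$ must be unital with the identity operator as unit (true) and that states on $B(\A)$ are defined via the identity operator as the distinguished unit element, which matches the definition of $V(\cdot,B(\A))$ used in the paper. No closure is needed on the left-hand side here, consistent with the statement as given.
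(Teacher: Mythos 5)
Your proof is correct, but it takes a genuinely different route from the paper. The paper proves this inclusion by invoking the Stampfli--Williams growth condition (Theorem \ref{t1}), writing $V(\sum_i u^{-1}a_iub_i,\A)=\bigcap_{z\in\C}\{\lambda: |\lambda-z|\le \|\sum_i a_iub_i-zu\|\}=\bigcap_{z\in\C}\{\lambda: |\lambda-z|\le \|(R_{a,b}-z)(u)\|\}$, and then using the evaluation inequality $\|(R_{a,b}-z)(u)\|\le\|R_{a,b}-z\|$ to compare the two intersections of disks. You instead exhibit, for each state $\varphi$ of $\A$ and each unitary $u$, an explicit state $\Phi(T)=\varphi(u^{-1}T(u))$ on $B(\A)$ with $\Phi(R_{a,b})=\varphi(\sum_i u^{-1}a_iub_i)$; your verification that $\Phi$ is a state (linearity, $\Phi(\mathrm{id})=1$, $\|\Phi\|\le\|u^{-1}\|\,\|u\|=1$, hence $\|\Phi\|=1$) is complete and uses only the definition of unitary in a Banach algebra. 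Your argument is more elementary and self-contained, since it never needs Theorem \ref{t1} and realizes each point of the left-hand side directly as a value of a state on $B(\A)$; the paper's argument buys uniformity with the rest of the text, because the same disk-intersection characterization is reused to prove the reverse inclusion in Theorem \ref{t3}. One cosmetic remark: the first half of your write-up (the attempted conjugation by $L_u$) is a dead end that you rightly abandon, and should be deleted from a final version; the actual proof is the state construction in the second half.
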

\begin{proof}
The norm of an elementary operator is defined by 
$$ \| R_{a, b} \| = Sup \{ \| R_{a, b}( x) \| : \;\;\;\| x \| \leq 1 \}.$$
From theorem \ref{t1} we deduce 
$$ V(R_{a,b}, B(\A)) = \bigcap_{z \in \C} \{\lambda : \;\;\; |\lambda - z |\leq \| R_{a,b}-z\| \}.$$
And $$ V(\sum_{i=1}^ {k}u^ {-1}a_iub_i, \A) = \bigcap_{z \in \C} \{\lambda : \;\;\; |\lambda - z |\leq \|\sum_{i=1}^ {k}u^{-1}a_iub_i-z\| \}.$$
which can be written 
$$ V(\sum_{i=1}^ {k}u^ {-1}a_iub_i, \A) = \bigcap_{z \in \C} \{\lambda : \;\;\; |\lambda - z |\leq \|\sum_{i=1}^ {k}a_iub_i-zu\| \}.$$
And hence 
$$ V(\sum_{i=1}^ {k}u^ {-1}a_iub_i, \A) = \bigcap_{z \in \C} \{\lambda : \;\;\; |\lambda - z |\leq \|(R_{a,b}-z)(u)\| \}.$$
But $\|(R_{a, b} -z)(u)\| \leq \|R_{a, b} -z\|,$ thus
$$ \cup \{ V(\sum_{i=1}^ {k}u^ {-1}a_iub_i, \A) : u \in U\} \subset V(R_{a, b}, B(\A)).$$
\end{proof}
If $\A$ is a unital $C^*-$algebra then $u\in \A$ is unitary if and only if $u^*u=uu^*=e.$
\begin{thm} \cite{RD}(Russo-Dy's theorem).\label{t2}
Let $\A$ be a $C^*-$algebra with unit element $1$ and unitary group $U.$ Then the closed unit ball
in $\A$ is the closed convex hull of $U.$
\end{thm}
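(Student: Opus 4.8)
The plan is to establish the two inclusions between the closed unit ball $B_1=\{x\in\A:\|x\|\le 1\}$ and $\overline{\mathrm{co}}\,U$ separately. The inclusion $\overline{\mathrm{co}}\,U\subseteq B_1$ is immediate: every $u\in U$ satisfies $\|u\|=1$, so $U\subseteq B_1$, and $B_1$ is closed and convex, hence contains the closed convex hull of $U$. All the content lies in the reverse inclusion $B_1\subseteq\overline{\mathrm{co}}\,U$. First I would reduce to strict contractions: if $\|a\|\le 1$ then $\|ra\|\le r<1$ for $0<r<1$ and $ra\to a$ as $r\to 1^-$, so since $\overline{\mathrm{co}}\,U$ is closed it suffices to show that every $a$ with $\|a\|<1$ lies in $\overline{\mathrm{co}}\,U$.

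The backbone is the self-adjoint case, which I would treat by continuous functional calculus. If $h=h^*$ and $\|h\|\le 1$ then $\sigma(h)\subseteq[-1,1]$, so $1-h^2\ge 0$ and $d:=(1-h^2)^{1/2}$ is a well-defined positive element commuting with $h$. Setting $u=h+id$ one checks
$$u^*u=(h-id)(h+id)=h^2+d^2=h^2+(1-h^2)=1,$$
and likewise $uu^*=1$, so $u\in U$ and $h=\tfrac12(u+u^*)$. Thus every self-adjoint contraction is the mean of two unitaries, and in particular lies in $\mathrm{co}\,U$.

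Combining this with the Cartesian decomposition gives a first, lossy bound. Writing $a=h+ik$ with $h=\tfrac12(a+a^*)$ and $k=\tfrac1{2i}(a-a^*)$ self-adjoint and $\|h\|,\|k\|\le\|a\|\le 1$, and noting that $iv$ is unitary whenever $v$ is, the self-adjoint case expresses both $h$ and $ik$ as means of two unitaries; hence $a$ is $\tfrac12$ of a sum of four unitaries, i.e. $\tfrac12 a\in\mathrm{co}\,U$. This already yields $\tfrac12 B_1\subseteq\mathrm{co}\,U$, and I expect the removal of this spurious factor $\tfrac12$ to be the main obstacle: the naive decomposition cannot be improved directly, and polar-decomposition arguments are unavailable because invertible elements need not be dense among strict contractions (e.g. a non-invertible isometry cannot be approximated by invertibles).

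To close the gap I would invoke the sharpening of Russo and Dye (in the streamlined form of Gardner): a strict contraction is in fact a \emph{finite} mean of unitaries, and more precisely $\|a\|\le 1-2/n$ forces $a=\tfrac1n\sum_{j=1}^n u_j$ with $u_j\in U$. The proof refines the self-adjoint computation above by an inductive averaging argument: at each stage one peels off a unitary while keeping the suitably normalized remainder a contraction, so that after $n$ steps the approximation error is forced below any prescribed threshold; this iterative norm estimate is the delicate technical point. Granting it, for any $a$ with $\|a\|<1$ one chooses $n\ge 2/(1-\|a\|)$ and obtains $a\in\mathrm{co}\,U$. By the reduction of the first paragraph, every element of $B_1$ then lies in $\overline{\mathrm{co}}\,U$, which together with the easy inclusion completes the proof.
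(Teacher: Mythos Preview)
The paper does not give a proof of this statement at all: Theorem~\ref{t2} is quoted from \cite{RD} as a known external result and is used only to derive Corollary~\ref{c1}. There is therefore no ``paper's own proof'' to compare your attempt against.

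Assessing your sketch on its own merits: the easy inclusion, the reduction to strict contractions, and the self-adjoint case $h=\tfrac12(u+u^*)$ are all correct and standard. The genuine gap is that you do not actually carry out the decisive step. After obtaining only $\tfrac12 B_1\subseteq \mathrm{co}\,U$ from the Cartesian decomposition, you write ``I would invoke the sharpening of Russo and Dye (in the streamlined form of Gardner)\dots Granting it\dots''; but that sharpening \emph{is} the theorem you are asked to prove, and the ``inductive averaging argument'' you allude to is precisely the content that is missing. Concretely, what one must show (Gardner's lemma) is that if $\|a\|<1$ and $u\in U$, then $a+u=v+b$ for some $v\in U$ and some $b$ with $\|b\|\le\|a\|$; iterating this starting from $u=1$ writes $na$ as a sum of $n$ unitaries plus a remainder of norm $<1$, and letting $n\to\infty$ gives $a\in\overline{\mathrm{co}}\,U$. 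Your write-up identifies the obstacle correctly but stops short of supplying this lemma or its proof, so as it stands the argument is incomplete.
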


\begin{cor} \label{c1}
 Let  $a=(a_1, ..., a_k)$ and $b=(b_1, ..., b_k)$ be two k-tuples of elements in  $\A.$ Then
$$ \| R_{a, b} \| = Sup \{ \| R_{a, b}( u) \| : \;\;\; u \in U(\A)\}$$
\end{cor}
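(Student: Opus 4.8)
The plan is to deduce Corollary \ref{c1} from the Russo--Dye theorem (Theorem \ref{t2}) together with the definition of the operator norm of $R_{a,b}$. One inequality is immediate: since every unitary $u$ satisfies $\|u\|=1$, the supremum over $u\in U(\A)$ of $\|R_{a,b}(u)\|$ is at most the supremum over the whole closed unit ball, which is $\|R_{a,b}\|$. So the content is the reverse inequality $\|R_{a,b}\|\le \sup\{\|R_{a,b}(u)\|:u\in U(\A)\}$.

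For the reverse inequality, write $M=\sup\{\|R_{a,b}(u)\|:u\in U(\A)\}$. Fix $x\in\A$ with $\|x\|\le 1$. By Theorem \ref{t2}, the closed unit ball of $\A$ is the closed convex hull of $U$, so there is a sequence $x_n\to x$ with each $x_n=\sum_{j} t_j^{(n)} u_j^{(n)}$ a finite convex combination of unitaries ($t_j^{(n)}\ge 0$, $\sum_j t_j^{(n)}=1$, $u_j^{(n)}\in U$). Since $R_{a,b}$ is linear, $R_{a,b}(x_n)=\sum_j t_j^{(n)} R_{a,b}(u_j^{(n)})$, hence by the triangle inequality and convexity $\|R_{a,b}(x_n)\|\le \sum_j t_j^{(n)}\|R_{a,b}(u_j^{(n)})\|\le \sum_j t_j^{(n)} M = M$. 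Finally, $R_{a,b}$ is a bounded (hence continuous) operator, so $\|R_{a,b}(x)\|=\lim_n\|R_{a,b}(x_n)\|\le M$. Taking the supremum over all $x$ in the unit ball gives $\|R_{a,b}\|\le M$, and combined with the trivial inequality this yields equality.

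I do not anticipate a serious obstacle here: the only subtlety is that the convex hull of $U$ must be taken in its \emph{closed} form, so one genuinely needs the continuity of $R_{a,b}$ to pass from convex combinations of unitaries to an arbitrary point of the unit ball — this is exactly where boundedness of the elementary operator (noted at the start of the paper) is used. If one prefers to avoid sequences, the same argument runs by observing that $x\mapsto \|R_{a,b}(x)\|$ is a continuous convex function on the unit ball, hence attains its maximum at an extreme point of the closed convex hull, and the extreme points lie in $\overline{U}$; but the elementary sequence argument above is cleaner and entirely self-contained given Theorem \ref{t2}.
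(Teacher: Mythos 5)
Your argument is correct and is exactly what the paper intends: Corollary \ref{c1} is stated there without proof as an immediate consequence of the Russo--Dye theorem, and your route (trivial inequality from $\|u\|=1$, then convex combinations of unitaries plus linearity, the triangle inequality, and continuity of $R_{a,b}$ for the reverse inequality) is the standard way to make that deduction precise. The only caveat is your parenthetical alternative via extreme points, which does not quite work as stated (the unit ball is not norm-compact, so a continuous convex function need not attain its maximum, and the extreme points of the unit ball of a $C^*$-algebra need not be unitaries); but your main sequence argument does not rely on it and is complete.
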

\section{Main Result}
The main result of this paper is the following theorem :
\begin{thm}\label{t3}
Let $\A$ be a unital $C^*-$algebra. Let $a=(a_1, ..., a_k)$ and $b=(b_1, ..., b_k)$ 
two $k-$tuples of elements in $\A.$ Then 
$$V(R_{a, b}, B(\A))= [\cup \{ V(\sum_{i=1}^ {k}u^*a_iub_i, \A): u\in U\} ]^-.$$
\end{thm}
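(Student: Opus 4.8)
The plan is to prove the two inclusions separately. One direction is essentially already done: Proposition \ref{p1} gives $\cup\{V(\sum_i u^*a_iub_i,\A):u\in U\}\subset V(R_{a,b},B(\A))$ (note that in a $C^*$-algebra $u^{-1}=u^*$ for unitary $u$), and since $V(R_{a,b},B(\A))$ is closed, the closure of the left-hand union is still contained in $V(R_{a,b},B(\A))$. So the whole content of the theorem is the reverse inclusion
\[
V(R_{a,b},B(\A))\subset\Bigl[\cup\{V(\textstyle\sum_{i=1}^k u^*a_iub_i,\A):u\in U\}\Bigr]^-.
\]

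To get this, I would use the Sinclair-type characterization of the numerical range via norms of translates, i.e. Theorem \ref{t1}. Write $W=\cup\{V(\sum_i u^*a_iub_i,\A):u\in U\}$. Since $W^-$ is a closed convex set (each $V(\cdot,\A)$ is convex, but the union need not be — so I must be slightly careful; more on this below), it is an intersection of closed half-planes, or equivalently by Theorem \ref{t1} applied uniformly, $W^-=\bigcap_{z\in\C}\{\lambda:|\lambda-z|\le r(z)\}$ where $r(z)=\sup_{u\in U}\|\sum_i u^*a_iub_i-z\|=\sup_{u\in U}\|\sum_i a_iub_i-zu\|=\sup_{u\in U}\|(R_{a,b}-z)(u)\|$. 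By Corollary \ref{c1} (the Russo–Dye consequence), this last supremum over unitaries equals $\|R_{a,b}-z\|$, the supremum over the whole unit ball. Hence $r(z)=\|R_{a,b}-z\|$, and Theorem \ref{t1} applied to $R_{a,b}\in B(\A)$ gives exactly $V(R_{a,b},B(\A))=\bigcap_{z\in\C}\{\lambda:|\lambda-z|\le\|R_{a,b}-z\|\}=W^-$.

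The step I expect to require the most care is the reduction of $W^-$ to $\bigcap_{z}\{\lambda:|\lambda-z|\le r(z)\}$. The identity $V(a,\A)=\bigcap_z\{\lambda:|\lambda-z|\le\|a-z\|\}$ holds for a single element; for the union over $u$ one has $W\subset\bigcap_z\{\lambda:|\lambda-z|\le r(z)\}$ immediately, but the reverse needs that the right-hand set is the \emph{closed convex hull} of $W$ and that $W$ is already convex (so that its closed convex hull is just $W^-$). Convexity of $W$ should follow from the fact that for unitaries $u,v$ the ``state twisted by $u$'' functionals $x\mapsto f(\sum_i u^*a_iub_i)$ range over a convex set as $f$ and $u$ vary — concretely, $V(\sum_i u^*a_iub_i,\A)=V(R_{a,b}\text{ compressed along }u)$ and one can interpolate using the path-connectedness of $U$ together with convexity of each individual numerical range; alternatively one shows directly that $\bigcap_z\{\lambda:|\lambda-z|\le r(z)\}$ is the smallest closed convex set containing $W$ via the Hahn–Banach/support-function description of convex sets (the support function of $W$ in direction $e^{i\theta}$ is $\sup_{u}\max\operatorname{Re}(e^{-i\theta}V(\sum_i u^*a_iub_i))=\sup_u \|\cdot\|$-type quantity, matching that of $V(R_{a,b})$). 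I would present the argument through support functions: for each $\theta$, $\sup\operatorname{Re}(e^{i\theta}V(R_{a,b},B(\A)))=\lim_{t\to 0^+}\frac{\|I+te^{i\theta}R_{a,b}\|-1}{t}$ and likewise for each twisted element, then invoke Corollary \ref{c1} to equate the two suprema; this sidesteps having to prove convexity of $W$ by hand, since $W^-$ and $V(R_{a,b},B(\A))$ are both closed convex with the same support function, hence equal.
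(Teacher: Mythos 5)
Your easy direction and your choice of ingredients (Theorem \ref{t1} together with Corollary \ref{c1}) are exactly the ones the paper uses, and you have correctly located the crux: identifying $\bigcap_{z\in\C}\{\lambda : |\lambda-z|\le\|R_{a,b}-z\|\}$ with $W^-$ rather than with some possibly larger convex set. But neither of the two fixes you sketch closes this. First, a support-function argument can only ever identify \emph{closed convex hulls}: if you show that $W$ and $V(R_{a,b},B(\A))$ have the same support function, you have shown $\overline{co}(W)=V(R_{a,b},B(\A))$ and nothing more; to conclude for $W^-$ itself you still need $W^-$ to be convex, which is precisely what you announced you would sidestep and which you then silently assume in your last sentence (``both closed convex with the same support function''). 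A union of convex numerical ranges indexed by the unitary group has no a priori reason to be convex (in a general unital $C^*$-algebra $U$ need not even be connected), so this is a genuine piece of the theorem, not a formality. Second, the support-function identity itself is not free: writing $x_u=\sum_i u^*a_iub_i$, you need $\inf_{t>0}\sup_{u}\frac{\|1+tx_u\|-1}{t}=\sup_{u}\inf_{t>0}\frac{\|1+tx_u\|-1}{t}$ (the left side is $\max\operatorname{Re}V(R_{a,b},B(\A))$ by Corollary \ref{c1}, the right side is $\sup\operatorname{Re}W$, and similarly after rotation by $e^{i\theta}$). The inequality $\sup_u\inf_t\le\inf_t\sup_u$ is automatic and is just Proposition \ref{p1} again; the reverse inequality is the hard inclusion in disguise, and no minimax theorem applies off the shelf since $U$ is not convex. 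So your plan ultimately reduces the theorem to a statement essentially equivalent to it.

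What the paper does instead --- and what your write-up is missing --- is a pointwise approximation that lands in $W^-$ directly, with no convexity of the union required: given $\lambda\in V(R_{a,b},B(\A))$ and $\epsilon>0$, it uses Corollary \ref{c1} to produce a \emph{single} unitary $u_\epsilon$ that nearly norms $R_{a,b}-z$, so that $|\lambda-z|\le\|\sum_i u_\epsilon^*a_iu_\epsilon b_i-z\|+\epsilon$, and then applies Theorem \ref{t1} to the one element $\sum_i u_\epsilon^*a_iu_\epsilon b_i$ to place $\lambda$ within $\epsilon$ of the single convex set $V(\sum_i u_\epsilon^*a_iu_\epsilon b_i,\A)\subset W$. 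That is the step you need to reproduce; and if you want to be more careful than the paper is, you must also deal with the fact that the nearly norming unitary depends on $z$, so the choice of $z$ (or a finite set of $z$'s capturing $\lambda$ up to $\epsilon$) has to be made before $u_\epsilon$ is selected.
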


\begin{proof}
We need only to show the inclusion "$\subset $". By theorem \ref{c1}
$$ \| R_{a, b} \| = Sup \{ \| \sum_{i=1}^ {k}u^*a_iub_i-zu \| : \;\;\; u \in U(\A)\}.$$
Hence, if $\lambda \in V(R_{a,b}, B(\A)),$ then, for all 
$z\in \C, \;\; \lambda \in \{\mu -z\leq \| R_{a, b}-z \| \}.$\\
Let $\epsilon >0,$ fixed, there exists a unitary $u_{\epsilon} \in \A$ such that 
$$ \| R_{a, b} -z\| \leq \| \sum_{i=1}^ {k}u_{\epsilon}^*a_iu_{\epsilon}b_i-zu \|+\epsilon.$$
Now using theorem \ref{t1}, we get
$$ V(\sum_{i=1}^ {k}u_{\epsilon}^ {-1}a_iu_{\epsilon}b_i, \A) = \bigcap_{z \in \C} \{\lambda : \;\;\; |\lambda - z |\leq \|\sum_{i=1}^ {k}a_iu_{\epsilon}b_i-zu_{\epsilon}\| \}.$$
So, there exists $\mu \in  V(\sum_{i=1}^ {k}u_{\epsilon}^ {-1}a_iu_{\epsilon}b_i, \A),$
such that $  \| \lambda -\mu  \| \leq \epsilon.$
But $\epsilon $ is arbitrary, hence $\lambda \in [\cup_{u \in U(\A)} V(\sum_{i=1}^ {k}u^ {-1}a_iub_i, \A)]^-.$
\end{proof}

\section{Some consequences}
\subsection{Generalized derivation}
We  can get many formulas by putting special elementary operators in our main theorem. 
For example, the generalized derivation defined by $\delta_{a,b}(x)=ax-xb,$ yields the following equality
$$ V( \delta_{a,b}, B(\A))= [\cup\{V(uau^*-b): u \in U\}]^-.$$
In the case of $\A=B(H)$ the algebra of bounded linear operators, it is well known \cite{Sha}, that 
$$ V( \delta_{A,B}, B(\A))=W(A)^--W(B)^-.$$
Which yields 
$$W(A)^--W(B)^- =[\cup\{W(AU-UB):  U \; unitary\; \}]^-.$$
An other case is the multiplication operator $M_{a,b}$ defined by $M_{a,b}(x)=axb.$ In \cite{CM}, Harte
asqued the following question: what is the numerical range of $M_{p,p}$ ? where $p$ is an orthogonal projection $p=p^*=p^2.$ 
From theorem \ref{t2} . we obtain:
$$ V(M_{a,b}, B(\A))= [\cup\{ V(uau^*b, \A) : u \in U(A)]^-.$$
And so $ V(M_{p,p}, B(\A))= [\cup\{ V(upu^*p, \A) : u \in U(A)]^-.$
Hence $M_{p,p}$ is hemitian if and only if $upu^*p$ is hermitian for all $u \in U(\A).$

\subsection{Essential numerical range}
Let $E$ be a complex Banach space and $B(E)$ the Banach algebra of all bounded linear operators acting on $F.$
Denote by $K(E)$ the ideal of all compact operators acting on $E,$ and let $\pi$ be the canonical projection from $B(E)$ 
onto the Calkin algebra $B(E)/K(E).$ Denote further by $ \|.\|_e$ the essential norm $\|T\|_e=inf\{\|T+K\|: K\in K(E)\}.$
The essential numerical range $V_e(T)$ of $T$ is defined by 
$$ V_e(T)= V(\pi(T), B(E)/K(E) , \|.\|_e ).$$
 Let $A=(A_1, ..., A_k)$ and $B=(B_1, ..., B_k)$  two $k-$tuples of elements in $B(E).$
 From Propsition \ref{p1}, we get 
 $$ V_e(\sum_{i=1}^{k}(U^{-1}A_iUB_i)) \subset V(R_{\pi(A), \pi(B)}, B(B(E)/K(E))).$$
 In the case of a Hilbert space $H$ we obtain from theorem \ref{t3}, that 
 $$ V_e(\sum_{i=1}^{k}(U^*A_iUB_i)) = V(R_{\pi(A), \pi(B)}, B(B(H)/K(H))).  \;\;\;(5)$$
 Numerical range of inner derivations $\delta_{\pi(T)}$ were studied by C.K.Fong \cite{Fo}, who proved :
 $$ V(\delta_{\pi(T)})= V_e(T)-V_e(T).$$
 Combining this formula with $(5)$ yields
  $$ V(\delta_{\pi(T)})= V_e(T)-V_e(T)= V_e(U^*TU-T).$$
 


\begin{thebibliography}{}
\bibitem{B}M. Barraa, \emph{A Formula for the Numerical Range of Elementary Operators}, Hindawi
Publishing Corporation, ISRN Mathematical Analysis, Volume 2014, Article ID246301. http://dxdoi.org/10.1155/2014/246301. 
\bibitem{BD1} F.F Bonsall and J. Duncan, \emph{Numerical range vol I},
Cambridge university press (1973).
\bibitem{BD2} F.F Bonsall and J. Duncan, \emph{Numerical range vol II},
Cambridge university press (1973).
\bibitem{CM},R.E. Curto and M. Mathieu \emph{Elementary operators and their Applications},
3nd international workshop 2009.

\bibitem{GR} K.E. Gustafson and D.K.M. Rao, \emph{Numerical range, The field of values of linear operators and matrices}, Springer(1997).
\bibitem{Fi}L. Fialkow, \emph{Elementary operators $ \& $ Applications},
Proceeding of the international workshop 1991.

\bibitem{Fo} C.K. Fong \emph{On the essential maximal numerical range}, Acta Sci. Math., 41(1979), 307-315.

\bibitem{Ky}J. Kyle, \emph{Numerical ranges of derivations}, Proc. Edinburgh.
Math. Soc., 21(1978), 33-39.


\bibitem{Mat} K. Mattila \emph{Complex strict and uniformconvexity and hyponormal operators}, Math. Proc. camb. Phil., 96(1984), 483-493.

\bibitem{Pie} A. Pietsch, \emph{Operator ideals}, North-Holland Mathematical library vol.20 (1980).

\bibitem{RD}B. Russo and H. Dye, \emph{A note on the unitary operators in $C^*-$algebras,} Duke Math. J.33(1966), 413-416.
J.Math. Anal.Appl. 33(1971), 212-219.


\bibitem{Se1} A. Seddik, \emph{The numerical range of elementary operators}, 
Integ. equ.oper.theory 43(2002) pp. 248–252.
\bibitem{Se3} A. Seddik, \emph{On the numerical range and norm of elementary operators}, Linear and Multilinear Algebra., 2004, Vol. 52, Nos. 3–4, pp. 293–302.
\bibitem{Se2} A. Seddik, \emph{The numerical range of elementary operators II}, Linear Algebra and it's Applications., 338(2001) 239-244.
\bibitem{Sha}S. Y. Shaw, \emph{On numerical ranges of generalized derivations
and related properties}, J. Aust. Math. Soc. Seri. A., 36(1984), 134-142.

\bibitem{St} J.G. Stampfli and J.P. Williams,\emph{Growth conditions and numerical range in a Banach algebra,} Tôhoku Math J.,20(1968), 417-424.
\bibitem {Tim}R. M. Timoney, \emph{Norms of elementary operators},
Ir. Math. Soc. Bull. \textbf{46}(2001), 13-17.



\end{thebibliography}
\end{document}